\newtheorem{theorem}{Theorem}
\theoremstyle{definition}
\theoremstyle{remark}
\newtheorem{remark}[theorem]{Remark}
\numberwithin{equation}{section}
\begin{document}

\title{An Infinite Family of Linklessly Embeddable Tutte-4-Connected Graphs }

\date{\today}
\author{
Andrei Pavelescu and
Elena Pavelescu
}

\address{
Department of Mathematics, University of South Alabama, Mobile, AL  36688, USA.
}

\maketitle
\rhead{Tutte-4-Connected}

\begin{abstract}
For each $n\ge 14$, we provide an example of a linklessly embeddable, Tutte-4-connected graph of order $n$.\end{abstract}

\section{Introduction}

In \cite{Ma}, Maharry proved that the graph $Q_{13,3}$, previously studied in \cite{Ra} and \cite{Th}, is 4-connected, triangle free graph (which he called Tutte-4-connected), and linklessly embeddable. 
It was only the second known such example.
Through private communication with Maharry, Robertson had conjectured that $K_{5,5}$ minus a perfect matching was the only graph with these properties, so Maharry's discovery was quite extraordinary.
 In this article we prove that the class of linklessly embeddable Tutte-4-connected graphs is quite rich, as we provide an example of such a graph for every order $n\ge 14$.

All the graphs considered in this article are finite and simple. A graph is called {\bf intrinsically linked} (IL) if every embedding of it in the three dimensional space contains a nontrivial link. 
A graph which is not IL is called {\bf linklessly embeddable} (nIL). 
The class of nIL graphs is closed under taking minors.
By work of Conway and Gordon \cite{CG}, Robsertson, Seymour, and Thomas \cite{RST}, and Sachs \cite{Sa}, a graph is nIL if and only if it does not contain any of the graphs in the Petersen family as a minor. 
The Petersen family consists of  the seven graphs obtained from $K_6$ by performing all the possible $\nabla Y-$moves and $Y\nabla-$moves, as described in Figure~\ref{fig-ty}.


\begin{figure}[htpb!]
\begin{center}
\begin{picture}(160, 50)
\put(0,0){\includegraphics[width=2.4in]{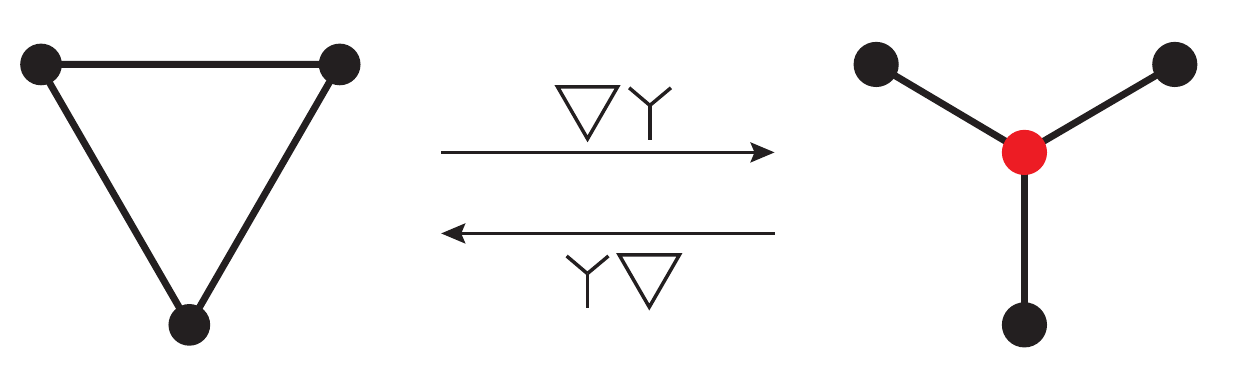}}
\end{picture}
\caption{\small  $\nabla Y-$ and $Y\nabla-$moves} 
\label{fig-ty}
\end{center}
\end{figure}

A graph is called {\bf apex} if it has a vertex whose deletion gives a planar graph. 
An apex graph is nIL \cite{Sa}.

A graph $G$ is the {\bf clique sum} of $G_1$ and $G_2$ over $K_p$ if $V(G)=V(G_1)\cup V(G_2)$, $E(G)=E(G_1)\cup E(G_2)$ and the subgraphs induced by $V(G_1)\cap V(G_2)$ in $G_1$ and $G_2$ are   both complete of order $p$.
 We use the notation $G=G_1\oplus_{K_p}G_2$.  
\section{Main Theorem}

In his 1974 paper \cite{Sl} classifying 4-connected graphs, Slater introduced the notion of $n$-vertex splitting. 
Given a simple graph $G$ and a vertex $v$ of $G$ of degree at least $2n-2$, a  {\bf $n$-vertex splitting}  is obtained by replacing $v$ by two adjacent vertices $a$ and $b$ and by adding one edge from each former neighbor of $v$ to either $a$ or $b$, but not both, such that the degree of both $a$ and $b$ is at least $n$.
 Theorems 0 and 1 of \cite{Sl} state that any graph obtained from an $n$-connected graph by either adding edges or performing $n$-vertex splittings remains $n$-connected. 
 We repeatedly use these results in our constructions.

To simplify arguments about linkless embedabbility we use a result of Holt, Lov\'asz, and Schrijver \cite{HLS}.
 It states that if $G$ is the clique sum over $S$ of two nIL graphs, then $G$ is IL if and only if one can contract two or three components of $G-S$ so that the contracted nodes together with $S$ form a $K_7$ minus a triangle.
 In particular, the clique sum over $K_4$ of two connected nIL graphs, with the extra condition that the clique is not a vertex cut in either of the two graphs, is a nIL graph.

Consider the graph $T$ of Figure \ref{T}(a). Deleting any of its vertices produces the planar graph $T'$ in Figure \ref{T}(b), thus $T$ is an apex graph and therefore it is linklessly embeddable.

\begin{figure}[h]
\begin{tabular}{ll}
\includegraphics[scale=.9]{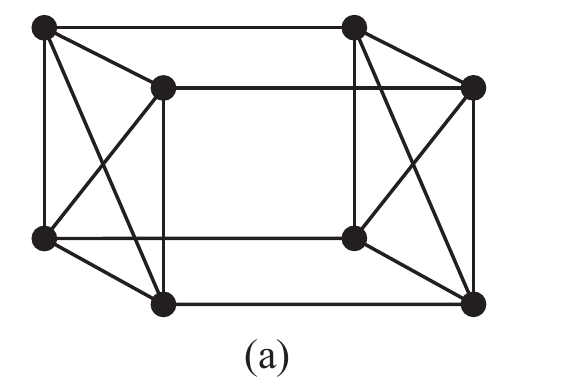}
&
\includegraphics[scale=.9]{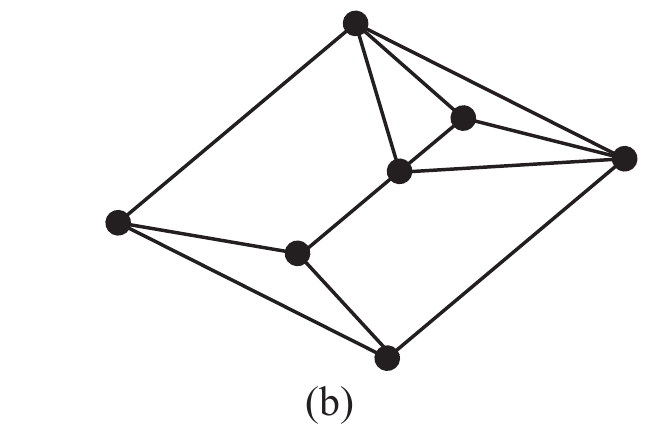}
\end{tabular}
\caption{(a) The graph $T$ is apex.\,\,\,\,\,\,\,\,\,(b) The planar graph $T'$.}
\label{T}
\end{figure}

\noindent Also consider the graph $C$ of Figure \ref{C}(a). Deleting the vertex labeled $a$, produces the planar graph $C'$ in Figure \ref{C}(b),  thus $C$ is also an apex graph and therefore linklessly embeddable.
\begin{figure}[h]
\begin{tabular}{ll}
\includegraphics[scale=1.1]{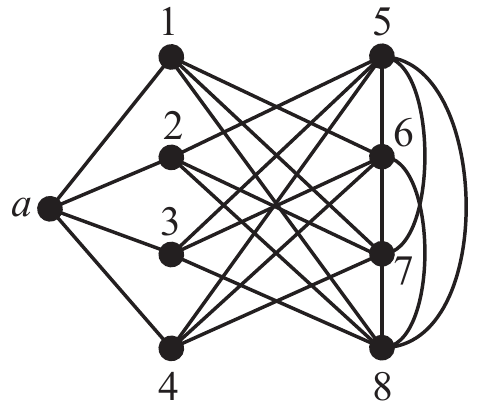} 
&
\includegraphics[scale=1.1]{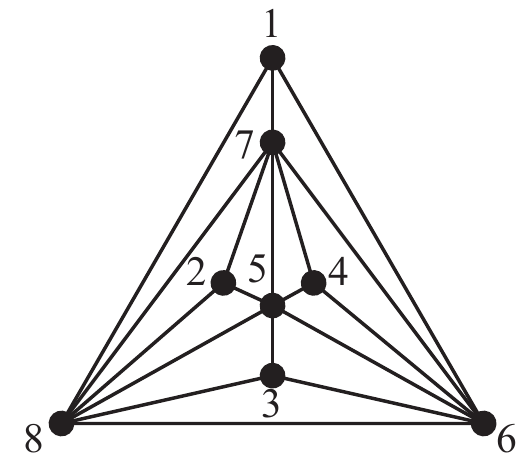}
\end{tabular}
\caption{(a) The graph $C$ is apex.\,\,\,\,\,\,\,\,\,(b) The graph $C'$ is planar.}
\label{C}
\end{figure}
We can now state the main result of this article.

\begin{theorem} For every integer $n\ge 13$, there exists a linklessly embeddable Tutte-4-connected graph of order $n$, $T_n$.
\label{main}
\end{theorem}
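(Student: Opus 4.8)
The plan is to argue by induction on $n$, using Slater's splitting theorem to propagate $4$-connectivity and an apex invariant to propagate linkless embeddability. I would take as base cases a short explicit list of Tutte-$4$-connected nIL graphs---Maharry's graph $Q_{13,3}$ for $n=13$, together with a few small graphs assembled from the apex gadgets $T$ and $C$ of Figures~\ref{T} and~\ref{C}---and verify their three defining properties directly. The inductive engine is a single $4$-vertex splitting, which replaces one vertex by two and so raises the order by exactly one.

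Throughout the induction I would maintain a distinguished apex vertex $w$ of $T_n$ and a fixed planar embedding of $T_n-w$, and I would only split a vertex $v\neq w$ of degree at least $6$. Three invariants then propagate. \textbf{$4$-connectivity} follows from Slater's Theorems~0 and~1 once the two new vertices $a,b$ each get degree at least $4$, which is exactly the condition $\deg v\ge 6$. \textbf{Triangle-freeness} is preserved because the neighborhood $N(v)$ is independent in a triangle-free graph: splitting it into the part $A$ joined to $a$ and the part $B$ joined to $b$ creates only the edge $ab$, whose endpoints share no neighbor since $A\cap B=\emptyset$, and routing a possible edge $vw$ to a single side introduces no triangle either. \textbf{Linkless embeddability} is preserved because I would take the split compatibly with the embedding, cutting the rotation of $v$ in $T_n-w$ into two contiguous arcs $A$ and $B$; this is the classical planar vertex split, so $T_{n+1}-w$ stays planar, $T_{n+1}$ is again apex, and hence nIL by Sachs.

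The main obstacle is guaranteeing a splittable vertex at every stage. One split turns a degree-$d$ vertex into vertices of degree at most $d-2$, so naive iteration erodes the very high degrees it consumes, and the planar complement $T_n-w$ cannot have all degrees large. I would resolve this by interleaving Slater-safe edge additions---each kept planar in $T_n-w$ and triangle-free---to restore a ``hub'' vertex of degree at least $6$ before each split, packaging a bounded sequence of such additions and splittings into a repeatable block that increases the order while returning the graph to a splittable configuration. With this block the induction reaches every $n$ above the base range, and the Holt--Lov\'asz--Schrijver clique-sum criterion stays in reserve as an alternate nIL check for any intermediate graph more naturally built as a clique sum of $T$ and $C$ than as an apex graph. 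I expect essentially all the genuine difficulty to sit in this degree-maintenance bookkeeping rather than in any single invariant.
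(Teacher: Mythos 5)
Your proposal correctly identifies the three invariants and handles two of them (Slater for $4$-connectivity, the independence of $N(v)$ for triangle-freeness), but it defers exactly the step that \emph{is} the proof, and that step is obstructed by a counting argument you never confront. If $T_n$ is $4$-connected, triangle-free, and apex at $w$, then $H=T_n-w$ is a triangle-free planar graph on $n-1$ vertices, so $|E(H)|\le 2(n-1)-4$; triangle-freeness through $w$ forces $N(w)$ to be independent in $H$, every vertex outside $N(w)$ needs $\deg_H\ge 4$, and every vertex of $N(w)$ needs $\deg_H\ge 3$, whence $|E(H)|\ge 2(n-1)-\deg(w)/2$. Consequently $\deg(w)\ge 8$, and the total number of edges that can ever be added inside $H$ while keeping it planar and triangle-free is at most $\deg(w)/2-4$. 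Each $4$-vertex splitting frees exactly one unit of this budget (one new vertex, one new edge in $H$), while manufacturing the next degree-$6$ hub out of the degree-$4$ vertices that splits produce costs about two units; so your ``repeatable block'' runs a structural deficit, payable only by repeatedly enlarging $\deg(w)$, which in turn needs an endless supply of vertices non-adjacent to all of $N(w)$ --- a supply your own splits degrade, since the mandatory new edge $ab$ destroys the bipartite quadrangulation structure that would guarantee it. None of this is addressed: the ``degree-maintenance bookkeeping'' you postpone is not bookkeeping but the theorem itself, and as stated the block very likely does not exist.

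The base cases are equally unsettled, and the natural candidates provably fail your apex invariant: $K_{5,5}$ minus a perfect matching has $20$ edges, and deleting any vertex leaves $16$ edges on $9$ vertices, exceeding the bound $2\cdot 9-4=14$ for triangle-free planar graphs, so it is not apex; likewise $Q_{13,3}$ is a quadrangulation of the torus with $26$ edges, and deleting any vertex leaves $22>2\cdot 12-4=20$ edges, so it is not apex either. Graphs ``assembled from $T$ and $C$'' by clique sums contain $K_4$'s, hence triangles, so they cannot serve as triangle-free bases; and passing to a triangle-free subgraph forfeits Slater's guarantee, since Theorems 0 and 1 of \cite{Sl} say nothing about subgraphs. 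This is precisely why the paper never asks $T_n$ to be apex: it maintains only $4$-connectivity and triangle-freeness on $T_n$ via Slater moves, and certifies linkless embeddability by exhibiting $T_n$ as a minor of a supergraph $S_n\simeq C\oplus_{K_4}T\oplus_{K_4}\cdots\oplus_{K_4}T\oplus_{K_4}C$, an iterated $K_4$-clique sum of the two apex gadgets, which is nIL by the Holt--Lov\'asz--Schrijver criterion \cite{HLS}; minor-closedness of nIL then finishes. The triangles forced by the clique sums live only in $S_n$, never in $T_n$. That decoupling --- linklessness certified on a triangle-full supergraph, triangle-freeness kept on the graph itself --- is the key idea your proposal is missing, and the tool you keep ``in reserve'' is in fact the engine of the proof.
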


\begin{proof} Take $T_{13}=Q_{13,3}$, the graph thoroughly analyzed in \cite{Ma} and displayed in Figure \ref{maharry}.
\begin{figure}[htpb!]
\begin{center}
\begin{picture}(140, 120)
\put(0,0){\includegraphics[width=1.8in]{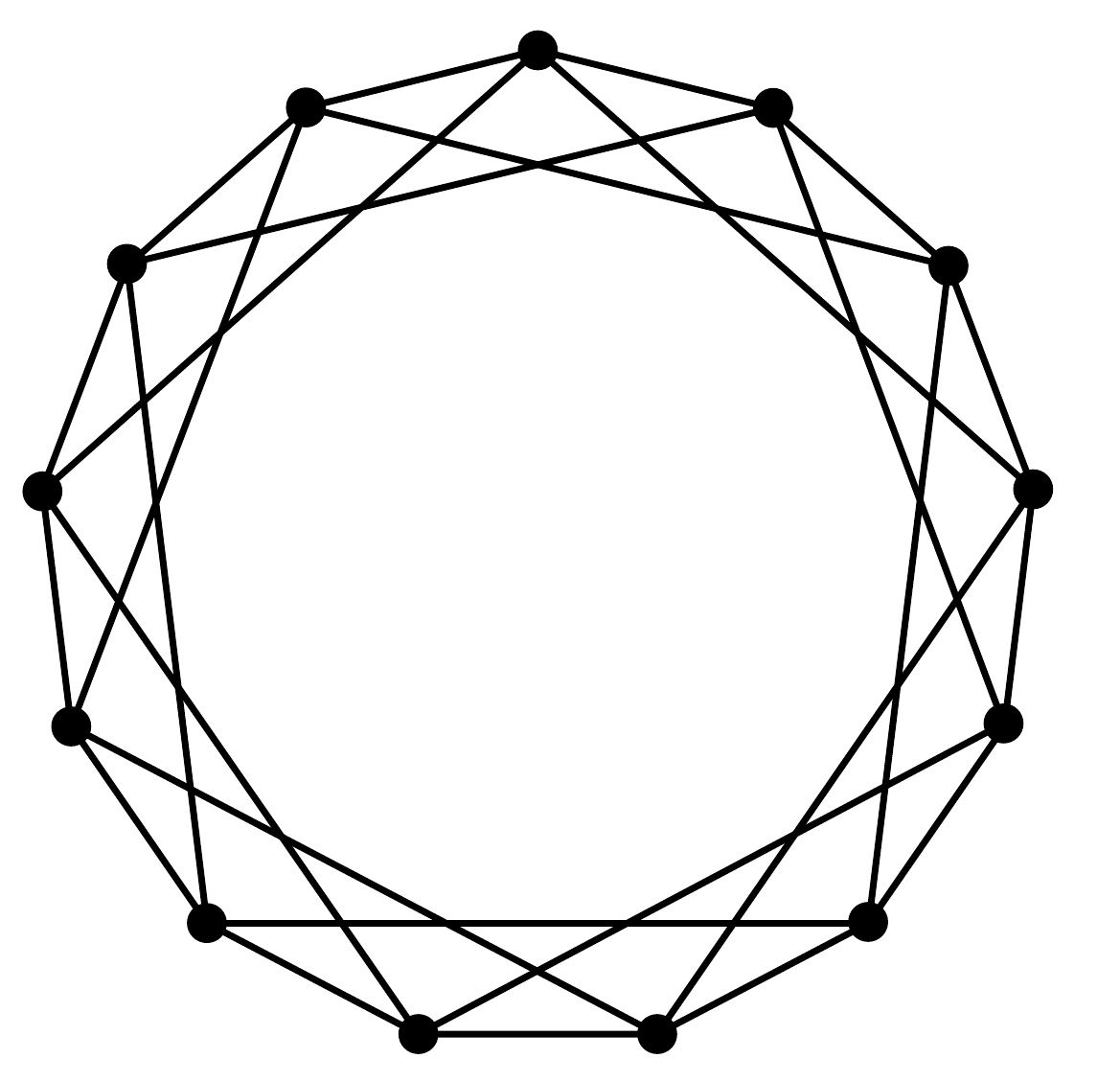}}
\end{picture}
\caption{$Q_{13,3}$ is 4-connected, triangle free, and maximal linklessly embeddable.}
\label{maharry}
\end{center}
\end{figure} 
The graph $T_{14}$ is constructed from $T_{10}$ (see Figure \ref{T10})  through successive 4-vertex splittings and edge additions. 
\begin{figure}[htpb!]
\begin{center}
\begin{picture}(150, 110)
\put(0,0){\includegraphics[width=2.3in]{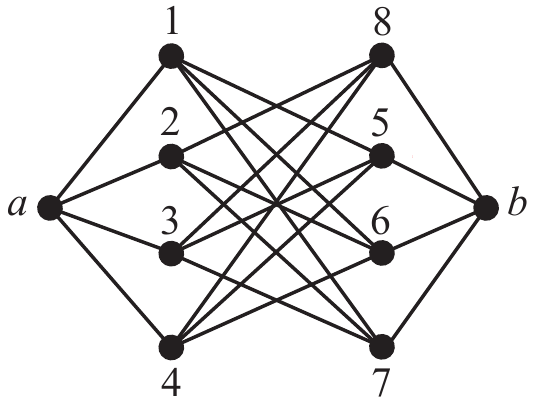}}
\end{picture}
\caption{$T_{10}$ is $K_{5,5}$ minus a perfect matching, the original Tutte-4-connected graph.}
\label{T10}
\end{center}
\end{figure} 
Start by adding the 6 missing edges in the subgraph induced by $\{5,6,7,8\}$ to form an induced $K_4$. 
We call this graph $S_{10}$. 
See Figure \ref{S10}.
This graph is no longer triangle free, but it is still 4-connected and nIL, as it is the clique sum over $K_4$ of $C$ and $K_5$. 
\begin{figure}[htpb!]
\begin{center}
\begin{picture}(150, 110)
\put(0,0){\includegraphics[width=2.3in]{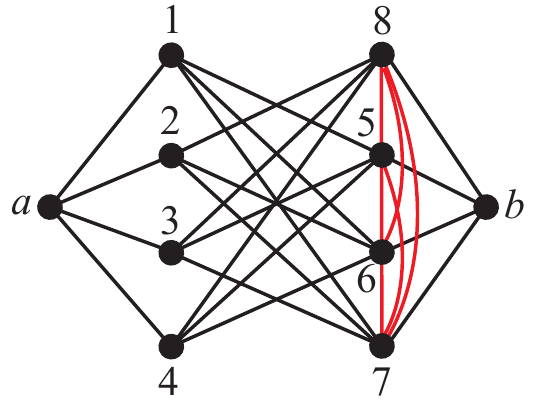}}
\end{picture}
\caption{$S_{10}$ is isomorphic to $C\oplus_{K_4}K_5$.}
\label{S10}
\end{center}
\end{figure} 
Then perform a sequence of vertex splittings and edge additions as detailed in Figure \ref{S11-14}.
Build $S_{11}$ from $S_{10}$ by splitting the vertex labeled 8 into $\{8,9\}$ and add the edge $\{6,9\}$. 
To build $S_{12}$ from $S_{11}$, split the vertex labeled 5 into $\{5,10\}$ and add the edge $\{7,10\}$. 
The graph $S_{13}$ is obtained by splitting the vertex labeled 6 into $\{6,11\}$ and $T_{14}$ is obtained by further splitting the vertex labeled $7$ into $\{7,12\}$.
\begin{figure}[htpb!]
\begin{center}
\begin{picture}(400, 260)
\put(0,0){\includegraphics[width=5.2in]{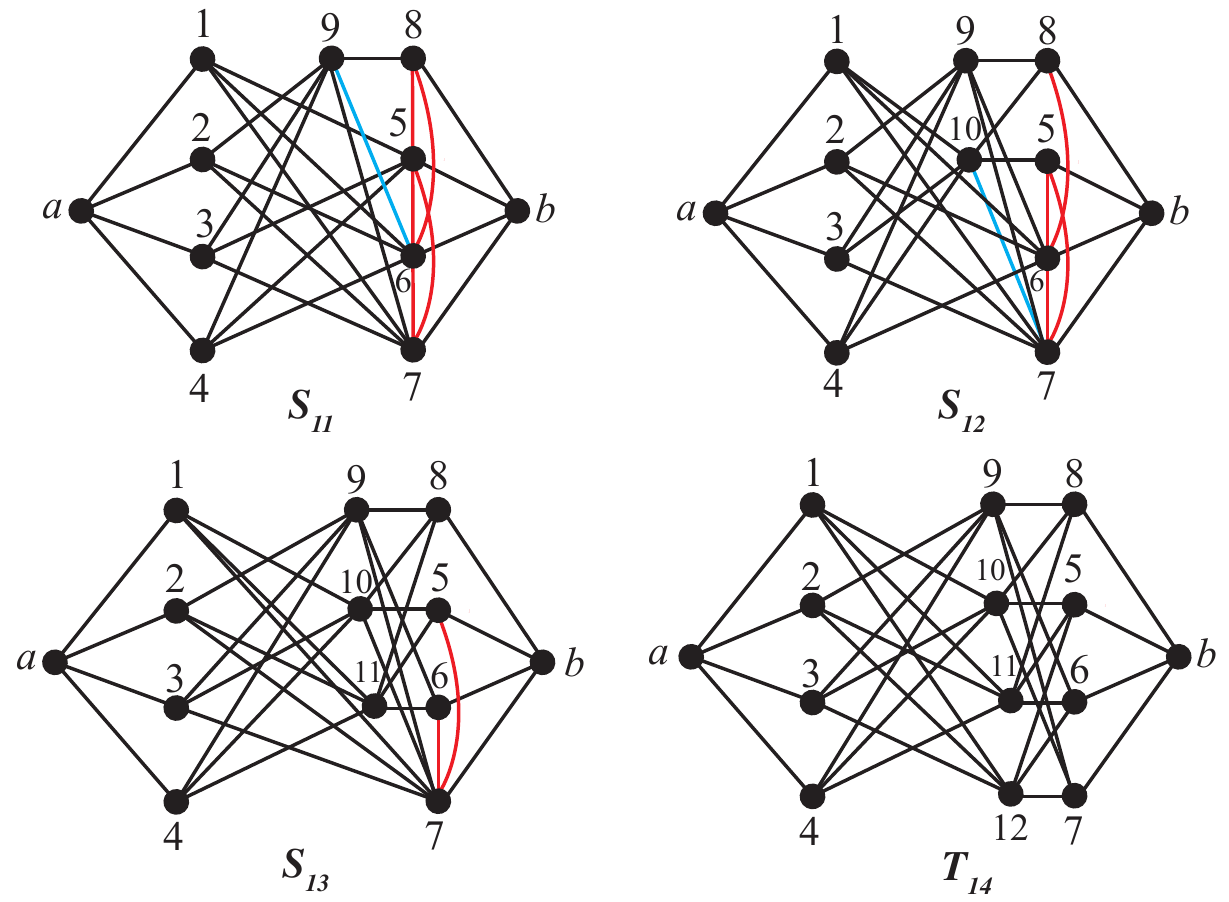}}
\end{picture}
\caption{Building $T_{14}$ from $T_{10}$.}
\label{S11-14}
\end{center}
\end{figure} 
 The graph $T_{14}$, redrawn for clarity in Figure \ref{T14}(a), was obtained from a 4-connected graph by 4-vertex splittings and edge additions, so it is 4-connected, by Theorems 0 and 1 of \cite{Sl}. It is also a triangle-free graph. Finally, $T_{14}$ is a subgraph of $S_{14}$ of Figure \ref{T14}(b), which is a nIL graph as $S_{14}\simeq C\oplus_{K_4} C$.
 Thus $T_{14}$ is also nIL.
\begin{figure}[h]
\begin{tabular}{ll}
\includegraphics[scale=0.85]{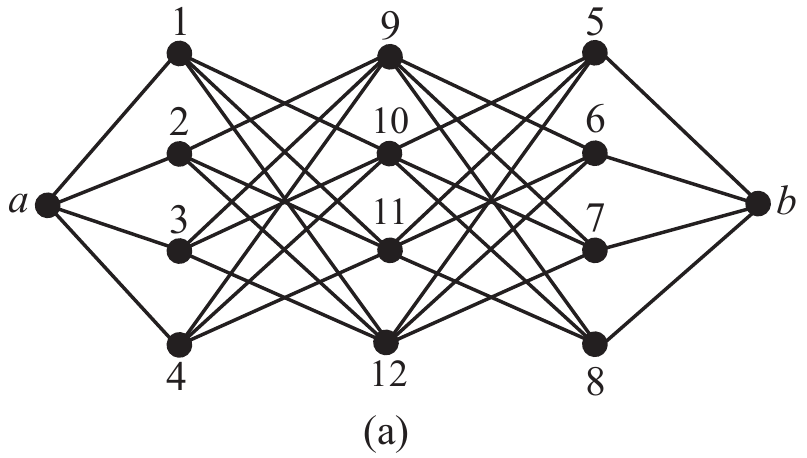}
&
\includegraphics[scale=0.85]{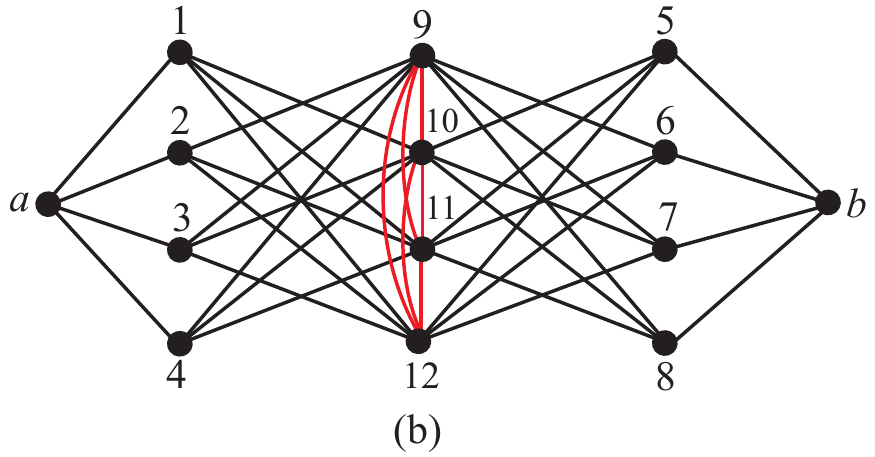}
\end{tabular}
\caption{(a) The graph $T_{14}$.\,\,\,\,\,\,\,\,\,(b) The graph $S_{14}$.}
\label{T14}
\end{figure}

The graphs $T_{15}$, $T_{16}$, $T_{17}$, and $T_{18}$, featured in Figure \ref{T15-18}, are obtained by successive 4-vertex splittings starting with the graph $T_{14}$ and thus they are all 4-connected. 
\begin{figure}[htpb!]
\begin{center}
\begin{picture}(420, 230)
\put(0,0){\includegraphics[width=5.7in]{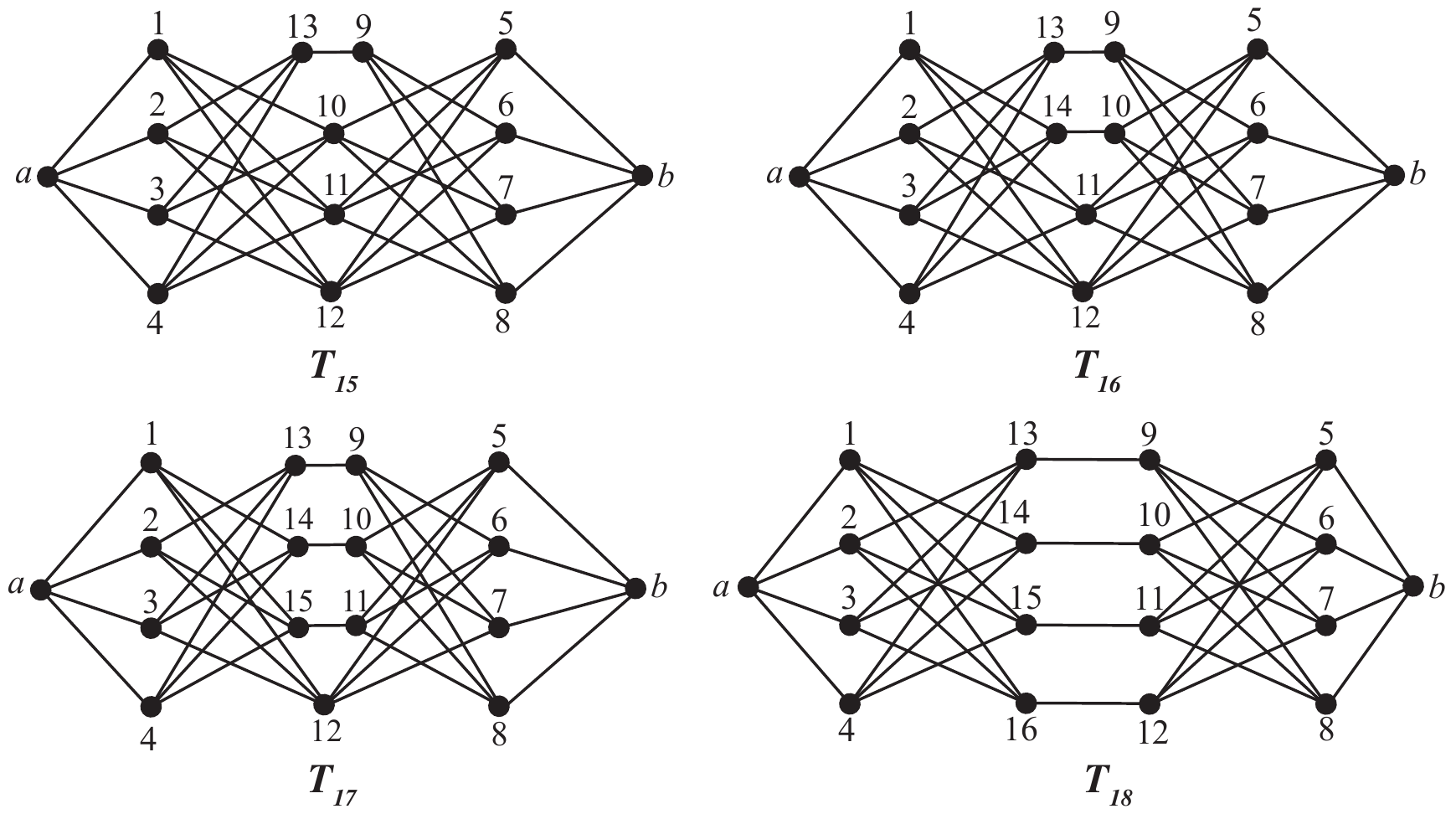}}
\end{picture}
\caption{Graphs $T_{15}$ through $T_{18}$.}
\label{T15-18}
\end{center}
\end{figure} 
Since vertex splittings do not create induced triangles, all four graphs are triangle-free. 
Finally, they are all minors of the graph $S_{18}$ featured in Figure \ref{S18} which is nIL as the consecutive clique sums over $K_4$ of nIL graphs ($S_{18}\simeq C \oplus_{K_4}T \oplus_{K_4} C $). 
This implies that $T_{15}$, $T_{16}$, $T_{17}$, and $T_{18}$ are all nIL. \\

\begin{figure}[htpb!]
\begin{center}
\begin{picture}(300, 115)
\put(0,0){\includegraphics[width=4in]{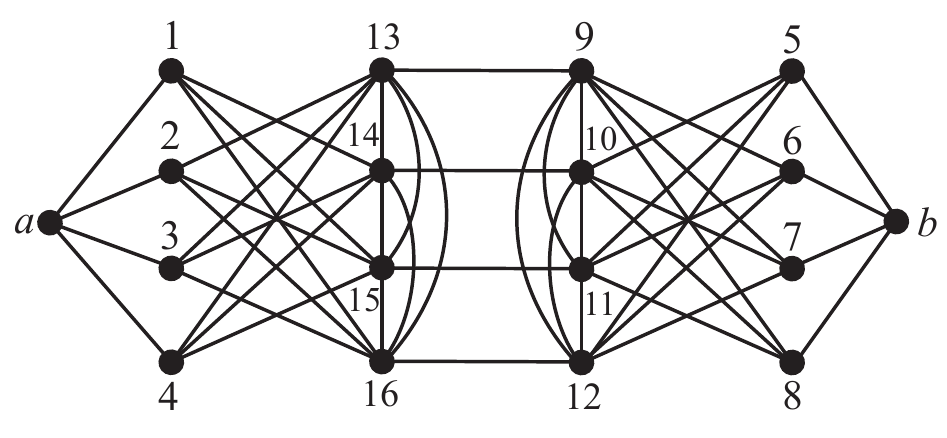}}
\end{picture}
\caption{The graph $S_{18}$ is isomorphic to $C\oplus_{K_4}T \oplus_{K_4}C$.}
\label{S18}
\end{center}
\end{figure} 

The next four graphs in this sequence are obtained from $T_{18}$ by edge additions and 4-vertex splittings. The choices of edges to be added and vertices to be split generalize to higher order graphs in this sequence and constitute the inductive step for the construction of all the graphs of order more than 22. For reference, we include them in Figure \ref{T19-22}.

\begin{figure}[htpb!]
\begin{center}
\begin{picture}(420, 210)
\put(0,0){\includegraphics[width=5.8in]{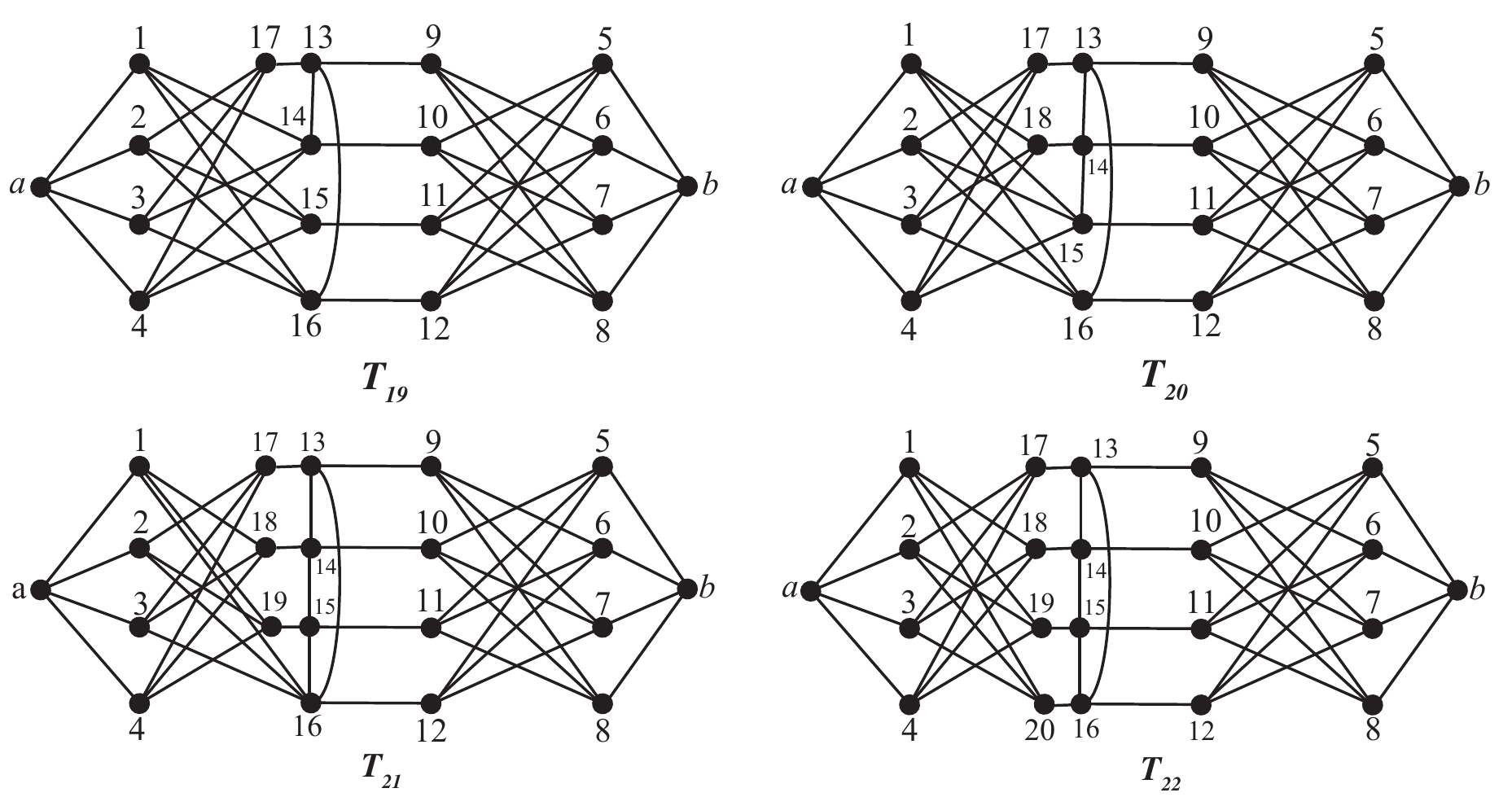}}
\end{picture}
\caption{The graphs $T_{19}$ through $T_{22}$.}
\label{T19-22}
\end{center}
\end{figure} 

The graph $T_{19}$ is obtained from $T_{18}$ by adding the edges $\{13,14\}$ and $\{13,16\}$, so that the degree of the vertex labeled 13 becomes 6, and then by 4-splitting this vertex into $\{13,17\}$. The graph $T_{20}$ is obtained from $T_{19}$ by first adding the edge $\{14,15\}$ and then by 4-splitting the vertex labeled 14 into $\{14,18\}$. The graph $T_{21}$ is obtained from $T_{20}$ by first adding the edge $\{15,16\}$ and then by 4-splitting the vertex labeled 15 into $\{15,19\}$. Finally, the graph $T_{22}$ is obtained by executing a 4-vertex splitting on the vertex labeled 16 into $\{16,20\}$. 

Since each of these four graphs was obtained by edge additions and 4-vertex splittings starting from a 4-connected graph, they are all 4-connected, by Theorems 0 and 1 of \cite{Sl}. One can easily check the graphs are triangle-free. Since they are all minors of the graph $S_{22}\simeq C\oplus_{K_4}T \oplus_{K_4}T \oplus_{K_4} C$, featured in Figure \ref{S22}, which is nIL, all the four graphs must be nIL.

\begin{figure}[htpb!]
\begin{center}
\begin{picture}(340, 120)
\put(0,0){\includegraphics[width=4.5in]{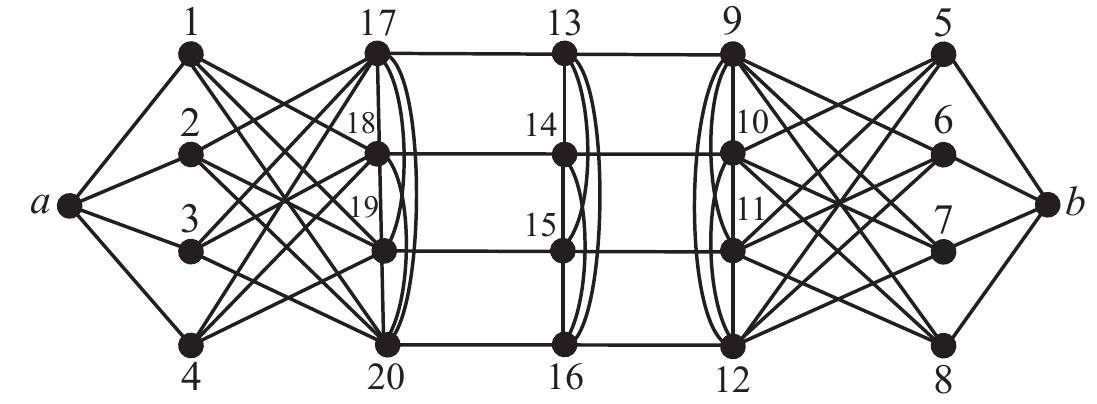}}
\end{picture}
\caption{The graph $S_{22}$ is isomorphic to $C\oplus_{K_4}T \oplus_{K_4}T \oplus_{K_4}C$.}
\label{S22}
\end{center}
\end{figure}

\begin{figure}[htpb!]
\begin{center}
\begin{picture}(320, 120)
\put(0,0){\includegraphics[width=4.5in]{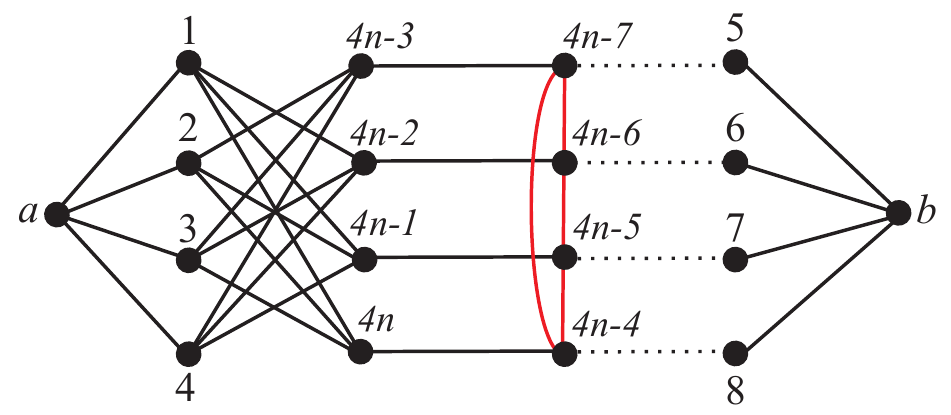}}
\end{picture}
\caption{The graph $T_{4n+2}$.}
\label{T4n+2}
\end{center}
\end{figure}
\end{proof}
In general, for $n\ge 5$, assuming the nIL, Tutte-4-connected graph $T_{4n+2}$ has already been build (see Figure \ref{T4n+2}), by the method described above one can build, by successive edge additions and 4-vertex splittings at  vertices labeled $4n-3$, $4n-2$, $4n-1$ , and $4n$, the graphs $T_{4n+3}, T_{4n+4}, T_{4n+5}, $ and $T_{4n+6}$.
These graphs are 4-connected by Theorems 0 and 1 of \cite{Sl}, and triangle-free by construction. They are all minors of the graph $S_{4n+6}$, obtained by adding all the missing edges in the subgraphs of $T_{4n+6}$
 induced by the vertices labeled $\{4k+1, 4k+2, 4k+3, 4k+4\}$, for $2\le k \le n$. This graph is isomorphic to $C \oplus_{K_4}T\oplus_{K_4}\ldots \oplus_{K_4}T\oplus_{K_4}C$, where the clique sum contains $(n-2)$ $T$-summands. The graph $S_{4n+6}$ is therefore nIL, and so must be $T_{4n+3}$ through $T_{4n+6}$.

\begin{remark} In this article, we've presented an example of a nIL Tutte-4-connected graph for every order $n\ge 14$. The constructions are dependent on the choices of edges to be added and vertices to be 4-vertex split from one order to another. For instance, the graphs of order $4n+2$, for $n>4$, contain the  4-cycles $\{4k+1, 4k+2, 4k+3, 4k+4\}$, for $2\le k \le n$. For each index $2\le k \le n$, one could pick another 4-cycle on the vertices $\{4k+1, 4k+2, 4k+3, 4k+4\}$ and still obtain a nIL, Tutte-4-connected graph. 
\end{remark}

\begin{remark} The graph $Q_{13,3}$, analyzed by Maharry in \cite{Ma}, is a splitter in the class of nIL graphs. In particular, none of the graphs we constructed in this article contains $Q_{13,3}$ as a minor. It would be extremely interesting to know whether there is another example of a Tutte-4-connected graph  of order larger than 13, which is a splitter for the class of nIL graphs.
\end{remark}

\bibliographystyle{amsplain}

\end{document}